\renewcommand{\ge}{\geqslant}
\renewcommand{\le}{\leqslant}
\newcommand{\arxiv}[1]{\href{http://arxiv.org/abs/#1}{\texttt{arXiv:#1}}}
\theoremstyle{plain}
\newtheorem{theorem}{Theorem}
\newtheorem{lemma}[theorem]{Lemma}
\newtheorem*{main}{Main Theorem}
\newcommand{\abs}[1]{\left\lvert #1 \right\rvert}
\DeclareMathOperator*{\E}{\mathbb{E}}
\title{Tomaszewski's problem on\\randomly signed sums:\\breaking the 3/8 barrier}
\author{Ravi B. Boppana\\
\small Department of Mathematics\\[-0.8ex]
\small M. I. T.\\[-0.8ex]
\small Massachusetts, USA\\
\small\tt rboppana@mit.edu
\and
Ron Holzman\\
\small Department of Mathematics\\[-0.8ex]
\small Technion--Israel Institute of Technology\\[-0.8ex]
\small Israel\\
\small\tt holzman@tx.technion.ac.il
}
\date{\small August 31, 2017\\
\small Mathematics Subject Classifications: 60C05, 05A20}
\begin{document}

\maketitle

\begin{abstract}
Let $v_1$, $v_2$, $\ldots\,$, $v_n$ be real numbers whose squares add up to~$1$.
Consider the $2^n$ signed sums of the form $S = \sum \pm v_i$.
Holzman and Kleitman (1992) proved that at least $\frac{3}{8}$ of these sums satisfy $\lvert S \rvert \le 1$.
This $\frac{3}{8}$ bound seems to be the best their method can achieve.
Using a different method, we improve the bound to $\frac{13}{32}$,
thus breaking the $\frac{3}{8}$ barrier.

\bigskip\noindent \textbf{Keywords:} combinatorial probability; probabilistic inequalities
\end{abstract}

\section{Introduction}

Let $v_1$, $v_2$, \dots, $v_n$ be real numbers such that the sum of their squares is at most~$1$.
Consider the $2^n$ signed sums of the form $S = \pm v_1 \pm v_2 \pm \dots \pm v_n$.
In 1986,
B.~Tomaszewski (see Guy~\cite{Guy}) asked the following question:
is it always true that at least $\frac{1}{2}$ of these sums
satisfy $\abs{S} \le 1$?
Most examples with $n = 2$ and $v_1^2 + v_2^2 = 1$ show that $\frac{1}{2}$ can't be replaced with a bigger number.

Holzman and Kleitman~\cite{HK} proved that at least $\frac{3}{8}$ of the sums satisfy $\abs{S} \le 1$.
This result was an immediate consequence of their main result:
at least $\frac{3}{8}$ of the sums satisfy the strict inequality $\abs{S} < 1$,
provided that each $\abs{v_i}$ is strictly less than~1.
This $\frac{3}{8}$ bound for $\abs{S} < 1$ is best possible:
consider the example with $n=4$ and $v_1 = v_2 = v_3 = v_4 = \frac{1}{2}$.
So $\frac{3}{8}$ seems to be a natural barrier to their method of proof.

Using a different method,
we prove that more than $\frac{13}{32}$ of the sums satisfy $\abs{S} \le 1$.
In other words, we break the $\frac{3}{8}$ barrier.
Our method, roughly speaking, goes like this.
We will let the first few $\pm$ signs be arbitrary.
But once the partial sum becomes near~1 in absolute value,
we will show that the final sum still has a decent chance of remaining at most~1 in absolute value.

We can actually improve the $\frac{13}{32}$ bound a tiny bit, to $\frac{13}{32} + 9 \times 10^{-6}$.  
Combining our method with other ideas, which could handle the tight cases for our analysis, may lead to further improvements of the bound. Still, the conjectured lower bound of $\frac{1}{2}$ currently appears to be out of reach.

Ten years after Holzman and Kleitman~\cite{HK} but independently,
Ben-Tal, Nemirovski, and Roos~\cite{BNR} proved that at least $\frac{1}{3}$ of the sums satisfy $\abs{S} \le 1$;
they say that the proof is mainly due to P.~van der Wal.
Shnurnikov~\cite{Shnurnikov} refined the argument of~\cite{BNR} to
prove a $36\%$ bound.
Even though these two bounds are weaker than that of Holzman and Kleitman,
the methods used to prove them are noteworthy.
In particular,
we will use the conditioning argument of~\cite{BNR}
and the fourth moment method of~\cite{Shnurnikov}.

Let Tomaszewski's constant be the largest constant~$c$ such that the fraction of sums that satisfy $\abs{S} \le 1$ is always at least~$c$.
We now know that Tomaszewski's constant is between $\frac{13}{32}$ and $\frac{1}{2}$.
Both \cite{HK} and \cite{BNR} conjecture that Tomaszewski's constant is $\frac{1}{2}$.
De, Diakonikolas, and Servedio~\cite{DDS} developed an algorithm to approximate Tomaszewski's constant.
Specifically, given an $\epsilon > 0$,
their algorithm will output a number that is within $\epsilon$ of Tomaszewski's constant.
The running time of their algorithm is exponential in $1/\epsilon^3$,
so it's not clear that we can run their algorithm in a reasonable amount of time to improve the known bounds
on Tomaszewski's constant.

The conjectured lower bound of $\frac{1}{2}$ has been confirmed in some special cases.
For example, von Heymann~\cite{Heymann} and Hendriks and van Zuijlen~\cite{HvZ} proved the conjecture when $n \le 9$.
Also, van Zuijlen~\cite{Zuijlen} and von Heymann~\cite{Heymann} proved the conjecture when all of the $\abs{v_i}$ are equal.  

We will use the language of probability.
Let $\Pr[A]$ be the probability of an event~$A$.
Let $\E(X)$ be the expected value of a random variable~$X$.
A \emph{random sign} is a random variable whose probability distribution is the uniform distribution on the set~$\{ -1, +1 \}$.
With this language, we can restate our main result.

\begin{main}
Let $v_1$, $v_2$, \dots, $v_n$ be real numbers such that $\sum_{i=1}^n v_i^2$ is at most~$1$.
Let $a_1$, $a_2$, \dots, $a_n$ be independent random signs.
Let $S$ be $\sum_{i = 1}^n a_i v_i$.
Then
$\Pr[ \abs{S} \le 1 ] > \frac{13}{32}.$
\end{main}

In Section~\ref{sec:initial_bound} of this paper,
we will provide a short proof of a bound better than~$\frac{3}{8}$.
In Section~\ref{sec:best_bound}, we will refine the analysis to improve the bound to~$\frac{13}{32}$ and slightly beyond.

\section{Beating the 3/8 bound}
\label{sec:initial_bound}

In this section,
we will give the simplest proof we can of a bound better than~$\frac{3}{8}$.
Namely, we will prove a bound of $\frac{37}{98}$,
which is a little more than $37.75\%$.
In Section~\ref{sec:best_bound},
we will improve the bound further.

We begin with a lemma.
Roughly speaking,
this lemma can be used to show that if a partial sum is a little less than~1,
then the final sum has a decent chance of remaining less than~1 in absolute value.

\begin{lemma} \label{lem:initial_bound}
Let $x$ be a real number such that $\abs{x} \le 1$.
Let $v_1$, $v_2$, \dots, $v_n$ be real numbers such that
\[
  \sum_{i = 1}^n v_i^2 \le \frac{2}{7} (1 + \abs{x})^2 .
\]
Let $a_1$, $a_2$, \dots, $a_n$ be independent random signs.
Let $Y$ be $\sum_{i=1}^n a_i v_i$.
Then
\[
  \Pr[ \abs{x + Y} \le 1 ] \ge \frac{37}{98} \, .
\]
\end{lemma}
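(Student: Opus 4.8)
The plan is to prove the lemma by a combination of the conditioning/fourth-moment ideas mentioned in the introduction, applied to the shifted variable $x + Y$. Without loss of generality I may assume $x \ge 0$, so the hypothesis reads $\sum v_i^2 \le \frac{2}{7}(1+x)^2$ with $0 \le x \le 1$. I want a lower bound on $\Pr[\abs{x+Y} \le 1] = \Pr[-1-x \le Y \le 1-x]$. The right tail is the dangerous one, since $1 - x$ can be small; the left tail constraint $Y \ge -1-x$ is comparatively mild because $1 + x \ge 1$. So the heart of the matter is to bound $\Pr[Y > 1-x]$ from above.

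**Key steps.**

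First I would normalize: write $\sigma^2 = \sum v_i^2 \le \frac{2}{7}(1+x)^2$ and set $t = 1 - x$, so $1 + x = 2 - t$ and $\sigma^2 \le \frac{2}{7}(2-t)^2$. Since $Y$ is symmetric, $\Pr[Y > t] = \Pr[Y < -t] = \frac12\Pr[\abs{Y} > t]$ (up to the atom at $\pm t$, which only helps), and similarly the left constraint gives $\Pr[Y < -(2-t)] \le \Pr[\abs Y > 2-t]$. Hence
\[
  \Pr[\abs{x+Y} \le 1] \;\ge\; 1 - \tfrac12\Pr[\abs Y > t] - \tfrac12\Pr[\abs Y > 2-t].
\]
Second, I would control each tail by a moment inequality. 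A Chebyshev bound $\Pr[\abs Y > t] \le \sigma^2/t^2$ is too weak near $t \to 0$, so instead I would use the fourth moment: for a Rademacher sum, $\E(Y^4) = 3(\sum v_i^2)^2 - 2\sum v_i^4 \le 3\sigma^4$, giving $\Pr[\abs Y \ge t] \le \E(Y^4)/t^4 \le 3\sigma^4/t^4$. Combined with the trivial bound $\Pr[\abs Y \ge t] \le 1$ and with Chebyshev, one gets a usable envelope for all $t$. The constant $\frac{2}{7}$ and the target $\frac{37}{98} = \frac{2}{7}\cdot\frac{37}{28}$ are presumably tuned so that the worst case of this envelope, optimized over the allowed range of $t$ and the allowed $\sigma^2$, comes out exactly to $1 - \frac{37}{98} = \frac{61}{98}$ for the total tail mass. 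Third, I would split into ranges of $t$: for $t$ bounded away from $0$ (say $t$ not too small relative to $\sigma$), Chebyshev on both tails suffices; for small $t$, the fourth-moment bound on the near tail plus a crude bound on the far tail does the job; and one checks the crossover point is covered. The case $t$ very small (i.e. $x$ near $1$) is favorable because then $\sigma^2 \le \frac{2}{7}(2-t)^2$ is close to $\frac{8}{7}$... wait, that exceeds $1$, so actually the genuine constraint there is still $\sigma^2 \le 1$ if we also imported $\sum v_i^2 \le 1$ — but the lemma as stated does not assume that, so the bound must hold from $\frac{2}{7}(1+x)^2$ alone, and indeed for $x$ near $1$ the allowed $\sigma^2$ is near $\frac{8}{7} > 1$ yet the fourth-moment tail bound still delivers enough because $2-t \approx 1$ keeps the far tail small and $t \approx 0$ makes the near-tail Chebyshev term $\sigma^2/t^2$ — hmm, that blows up, so there the fourth-moment bound $3\sigma^4/t^4$ must also be supplemented, likely by the observation that when $\sigma^2$ is that large relative to $t$ one instead uses $\Pr[\abs Y \ge t] \le 1$ trivially and relies entirely on the far tail being tiny; I would make these three or four regimes explicit and verify the minimum of the resulting piecewise bound is $\frac{37}{98}$.

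**Main obstacle.**

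The genuine difficulty is the small-$t$ regime (equivalently $\abs x$ close to $1$), where the near tail $\Pr[Y > 1 - x]$ is being asked to be bounded by a constant strictly less than $1$ even though $1-x$ may be far smaller than the standard deviation of $Y$. Chebyshev and even the fourth moment are useless for that tail in isolation — they only give bounds $> 1$. The resolution must exploit that $Y$ is a \emph{bounded}, symmetric sum: when $1-x$ is tiny, $1+x \approx 2$ is large compared with $\sigma \le \sqrt{2/7}\cdot 2 \approx 1.07$, so the far constraint $Y \ge -1-x$ fails with probability much less than $\frac{61}{98}$, and one needs the near tail to contribute at most $\frac{61}{98} - (\text{far tail})$, i.e. strictly less than roughly $0.62$. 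Getting $\Pr[Y > 1-x] < 0.62$ uniformly — when it could a priori be as large as $\frac12$ already from symmetry alone, plus the atom — means the real content is showing $\Pr[Y > 1-x] \le \frac12 + (\text{small})$, which follows from symmetry once one argues the "middle" mass $\Pr[\abs Y \le 1-x]$ plus the excess isn't too large; this is exactly where I expect to spend the most care, and where the precise value $\frac{2}{7}$ is doing its work. I would expect the author to handle this either by a clever deterministic inequality on Rademacher tails or by a short case analysis on whether some $\abs{v_i}$ is large.
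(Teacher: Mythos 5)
Your setup contains exactly the right ingredients --- the symmetry decomposition and the fourth-moment Chebyshev bound, which are also the paper's --- but the proof is not actually completed, and the ``main obstacle'' you identify does not exist. After writing (with $t=1-x$, so $2-t=1+x$) the inequality $\Pr[\abs{x+Y}\le 1] \ge 1 - \tfrac12\Pr[\abs{Y}>t] - \tfrac12\Pr[\abs{Y}>2-t]$, you treat the near tail $\Pr[Y>1-x]$ as the crux, propose a multi-regime analysis in $t$, and end with speculation about ``a clever deterministic inequality on Rademacher tails.'' The missing step is one line from your own display: since $1-x\ge 0$, symmetry alone gives the trivial bound $\Pr[Y>1-x]\le\Pr[Y>0]\le\tfrac12$, and the target $\tfrac{37}{98}<\tfrac12$ is calibrated precisely to concede this entire $\tfrac12$. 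No moment bound at the threshold $1-x$ is ever needed (indeed none is possible uniformly, as you correctly sensed). The only threshold at which the fourth moment is invoked is $1+x$: with $\sigma^2\le\tfrac27(1+x)^2$ you get $\Pr[\abs{Y}\ge 1+x]\le 3\sigma^4/(1+x)^4\le\tfrac{12}{49}$, hence $\Pr[Y<-(1+x)]\le\tfrac{6}{49}=\tfrac{12}{98}$, and adding up gives $\Pr[\abs{x+Y}\le 1]\ge 1-\tfrac{49}{98}-\tfrac{12}{98}=\tfrac{37}{98}$. This also dissolves your worry about $\sigma^2$ approaching $\tfrac87>1$ when $x$ is near $1$.

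The completed argument is exactly the paper's proof, phrased there as: by symmetry $\Pr[-1-x<Y\le 0]\ge\tfrac12\Pr[\abs{Y}<1+x]\ge\tfrac12\cdot\tfrac{37}{49}$, and on the event $-1-x<Y\le 0$ one has $\abs{x+Y}\le 1$ because $0\le x\le 1$. Discarding the whole event $Y>0$ in this way is equivalent to your decomposition with the trivial $\tfrac12$ bound on the near tail. So the gap is not one of approach but of execution: you had every piece, yet stopped short of assembling them and instead concluded that the lemma needs a nontrivial uniform bound on $\Pr[Y>1-x]$, which it does not.
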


\begin{proof}
By symmetry, we may assume that $x \ge 0$.
The fourth moment of~$Y$ is
\[
  \E(Y^4)
	  = 3 \Bigl( \sum_{i = 1}^n v_i^2 \Bigr)^2 - 2 \sum_{i = 1}^n v_i^4
	  \le 3 \Bigl( \sum_{i = 1}^n v_i^2 \Bigr)^2
		\le \frac{12}{49} (1 + x)^4 .
\]
So, by the fourth moment version of Chebyshev's inequality\footnote{Shnurnikov~\cite{Shnurnikov}
used the fourth moment in a similar situation.},
\[
  \Pr[\abs{Y} \ge 1 + x]
	  \le \frac{\E(Y^4)}{(1 + x)^4}
		\le \frac{12}{49} \, .
\]
Looking at the complement,
\[
  \Pr[\abs{Y} < 1 + x] \ge \frac{37}{49} \, .
\]
Because $Y$ has a symmetric distribution,
\[
  \Pr[-1 - x < Y \le 0]
	  \ge \frac{1}{2} \Pr[\abs{Y} < 1 + x]
		\ge \frac{37}{98} .
\]
Recall that $x \le 1$.
Hence if $-1 - x < Y \le 0$, then $\abs{x + Y} \le 1$.
Therefore
\[
  \Pr[ \abs{x + Y} \le 1 ]
		\ge \Pr[-1 - x < Y \le 0]
		\ge \frac{37}{98} \, .
\]
\end{proof}

Next we will use Lemma~\ref{lem:initial_bound} to go beyond the $\frac{3}{8}$ bound.

\begin{theorem} \label{thm:initial_bound}
Let $v_1$, $v_2$, \dots, $v_n$ be real numbers such that $\sum_{i=1}^n v_i^2$ is at most~$1$.
Let $a_1$, $a_2$, \dots, $a_n$ be independent random signs.
Let $S$ be $\sum_{i = 1}^n a_i v_i$.
Then
\[
  \Pr[ \abs{S} \le 1 ] \ge \frac{37}{98} \, .
\]
\end{theorem}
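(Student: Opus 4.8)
The plan is to condition on a well‑chosen prefix of the signs so that the partial sum left over plays the role of~$x$ in Lemma~\ref{lem:initial_bound}, and then to average the Lemma's estimate over the conditioning. Two regimes are easy. If $\sum_{i=1}^n v_i^2 \le \frac{2}{7}$, then Lemma~\ref{lem:initial_bound} applied with $x = 0$ and with the same $v_1, \dots, v_n$ gives $\Pr[\abs{S} \le 1] \ge \frac{37}{98}$ at once, since $\sum v_i^2 \le \frac{2}{7} = \frac{2}{7}(1 + \abs{0})^2$. For the other regimes I would first reorder the coordinates so that $\abs{v_1} \ge \abs{v_2} \ge \dots \ge \abs{v_n}$ and, using the symmetry of the random signs, assume each $v_i \ge 0$. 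If $v_1 \ge \frac{5}{9}$, condition on $a_1$: writing $Y = \sum_{i=2}^n a_i v_i$, the variable $Y$ is symmetric and independent of $a_1$, so $\Pr[\abs{S} \le 1] = \Pr[\abs{v_1 + Y} \le 1]$; and $\sum_{i=2}^n v_i^2 \le 1 - v_1^2 = (1-v_1)(1+v_1) \le \frac{2}{7}(1+v_1)^2$ precisely because $v_1 \ge \frac{5}{9}$, so Lemma~\ref{lem:initial_bound} with $x = v_1$ finishes this case.

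The main case is $\sum v_i^2 > \frac{2}{7}$ together with all coordinates of modest size. Here I would reveal the signs one at a time in the (decreasing) order above, forming partial sums $x_k = \sum_{i=1}^k a_i v_i$, and stop at the first time $\tau$ at which $\abs{x_\tau}$ reaches a threshold $\gamma$ just below~$1$. On the event that $\abs{x_k}$ never reaches $\gamma$, one has $\abs{S} = \abs{x_n} < \gamma \le 1$ for free, so those realizations only help. On the event that we stop at time $\tau$, condition on $a_1, \dots, a_\tau$; then $\abs{x_\tau} \le \abs{x_{\tau-1}} + v_\tau < \gamma + v_\tau$, which is at most~$1$ once the coordinates are small enough, while $\abs{x_\tau} \ge \gamma$ forces $\frac{2}{7}(1 + \abs{x_\tau})^2 \ge \frac{2}{7}(1+\gamma)^2$, which is at least $1 \ge \sum_{i>\tau} v_i^2$ once $\gamma$ is close enough to~$1$. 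Thus the hypotheses of Lemma~\ref{lem:initial_bound} hold at the stopping time, and since $\frac{37}{98} \le 1$ we get
\[
  \Pr[\abs{S}\le1] \ge \frac{37}{98}\Pr[\,\abs{x_k}\text{ reaches }\gamma\,] + \Pr[\,\abs{x_k}\text{ never reaches }\gamma\,] \ge \frac{37}{98}.
\]

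The step I expect to be the main obstacle is making a single choice of~$\gamma$ work. The argument needs $\gamma + \max_i v_i \le 1$ (so the partial sum does not overshoot past~$1$ at the stopping time) and $(1+\gamma)^2 \ge \frac{7}{2}$ (so the tail condition of the Lemma is automatic), and these are compatible only when $\max_i v_i$ is small, roughly $\max_i v_i \le 2 - \sqrt{7/2}$. Combined with the case $v_1 \ge \frac{5}{9}$ handled above, this leaves a band of intermediate‑sized largest coordinates untreated by the version of the argument just sketched. I expect that band to be the real content of the proof: one must either reveal and condition on the signs of the boundedly many coordinates of intermediate size before starting the threshold process, or sharpen the stopping rule so that it exploits the fact that, by the time $\abs{x_k}$ has grown, the remaining sum of squares $\sum_{i>k} v_i^2$ has correspondingly shrunk (rather than bounding it crudely by~$1$). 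Either way, the delicate point is to keep the partial sum at or below~$1$ at the moment Lemma~\ref{lem:initial_bound} is invoked, on every branch.
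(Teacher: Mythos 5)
Your two easy regimes are fine (the computation showing $1-v_1^2\le\frac27(1+v_1)^2$ exactly when $v_1\ge\frac59$ is correct), but the proposal does not prove the theorem: as you yourself note, a fixed threshold $\gamma$ needs both $\gamma+\max_i\abs{v_i}\le 1$ (no overshoot at the stopping time) and $\frac27(1+\gamma)^2\ge 1$ (so the crude tail bound $\sum_{i>\tau}v_i^2\le 1$ suffices), which forces $\max_i\abs{v_i}\le 2-\sqrt{7/2}\approx 0.13$; together with the $v_1\ge\frac59$ case this leaves the whole band $\max_i\abs{v_i}\in(2-\sqrt{7/2},\frac59)$ untreated, and that band is precisely where the content of the theorem lies (e.g.\ several coordinates of size around $\frac12$). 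So this is a genuine gap, not a routine detail: neither of the two fixes you gesture at is carried out, and the first one (conditioning on the intermediate coordinates up front) does not obviously work, since after that conditioning the partial sum can already exceed $1$ on some branches.

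The missing idea, which is the heart of the paper's argument, is your second suggestion made quantitative: use the \emph{adaptive} stopping rule ``stop at the first $t$ with $\abs{X_t}>1-\abs{v_{t+1}}$ (or $t=n-1$)''. This rule kills both of your constraints at once. The triangle inequality gives $\abs{X_T}\le\abs{X_{T-1}}+\abs{v_T}\le 1$ on every branch, with no smallness assumption on the coordinates; and when the rule triggers with $T\le n-3$, the inequality $\abs{v_{T+1}}>1-\abs{X_T}$ converts the growth of the partial sum into a shrinking of the tail, because after reordering so that the two largest coordinates sit at the \emph{end} (which also yields $\abs{v_1}+\abs{v_2}\le 1$, hence $T\ge 2$) one has $\abs{v_1},\abs{v_2}\ge\abs{v_{T+1}}$ and therefore
\[
  \sum_{i=T+1}^n v_i^2\le 1-v_1^2-v_2^2\le 1-2v_{T+1}^2<1-2(1-\abs{X_T})^2\le\tfrac27(1+\abs{X_T})^2,
\]
the last step being the identity $1-2(1-x)^2=\frac27(1+x)^2-\frac17(4x-3)^2$. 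The remaining branches $T=n-1$ and $T=n-2$, where this tail estimate is unavailable, are handled not by Lemma~\ref{lem:initial_bound} but by a reflection argument: conditioned on $X_T\in[0,1]$, the event $Y_T\le 0$ has probability at least $\frac12$ and forces $\abs{S}\le 1$ there (using $\abs{v_{n-1}}+\abs{v_n}\le 2-\abs{v_1}$ in the $T=n-2$ case, which is where the special placement of the two largest coordinates is used again). Your sketch has the right scaffolding (conditioning at a stopping time and invoking the fourth-moment lemma), but without the adaptive threshold and the tail bound it certifies, the proof does not go through.
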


\begin{proof}
By inserting $0$'s,
we may assume that $n \ge 4$.
By permuting,
we may assume that the four largest $\abs{v_i}$ are
$\abs{v_n} \ge \abs{v_1} \ge \abs{v_{n-1}} \ge \abs{v_2}$.
By the quadratic mean inequality,
\[
  \frac{\abs{v_1} + \abs{v_2} + \abs{v_{n-1}} + \abs{v_n}}{4}
	  \le \sqrt{ \frac{v_1^2 + v_2^2 + v_{n-1}^2 + v_n^2}{4} }
		\le \sqrt{ \frac{1}{4} }
		= \frac{1}{2} \, .
\]
So $\abs{v_1} + \abs{v_2} + \abs{v_{n-1}} + \abs{v_n} \le 2$.
Because of our ordering,
\[
  \abs{v_1} + \abs{v_2}
	  \le \frac{\abs{v_1} + \abs{v_n}}{2} + \frac{\abs{v_2} + \abs{v_{n-1}}}{2}
		\le 1.
\]

Given an integer~$t$ from $0$ to $n$,
let $X_t$ be the partial sum~$\sum_{i=1}^t a_i v_i$
and let $Y_t$ be the remaining sum~$\sum_{i = t + 1}^n a_i v_i$.
Let $T$ be the smallest nonnegative integer~$t$ such that
$t = n - 1$ or $\abs{X_t} > 1 - \abs{v_{t + 1}}$.
In a stochastic process such as ours,
$T$ is called a stopping time, defined by the stopping rule in the previous sentence\footnote{A similar stopping rule
was implicitly used by Ben-Tal \emph{et al.}~\cite{BNR} and refined by Shnurnikov~\cite{Shnurnikov}.
In addition, \cite{Shnurnikov} pointed out the value of having $\abs{v_1} + \abs{v_2} \le 1$.}.
Note that $T \ge 2$, since $\abs{v_1} + \abs{v_2} \le 1$.
By the stopping rule, $\abs{X_{T - 1}} \le 1 - \abs{v_T}$.
Hence by the triangle inequality,
\[
  \abs{X_T}
		\le \abs{X_{T-1}} + \abs{v_T}
		\le 1 - \abs{v_T} + \abs{v_T}
		= 1.
\]
Also by the stopping rule, if $T < n - 1$, then $\abs{X_T} > 1 - \abs{v_{T + 1}}$.

We will condition on $T$ and $X_T$.
We claim that
\[
  \Pr[\abs{S} \le 1 \mid T, X_T ] \ge \frac{37}{98} \, .
\]
By averaging over $T$ and $X_T$,
this claim implies the theorem.
To prove the claim, we may assume by symmetry that $X_T \ge 0$.
We will divide the proof of the claim into three cases, depending on~$T$.

\medskip

\textbf{Case 1:}
$T = n - 1$.
In this case, $\abs{Y_T} = \abs{v_n} \le 1$.
Recall that $0 \le X_T \le 1$.
Hence if $Y_T \le 0$, then $\abs{S} = \abs{X_T + Y_T} \le 1$.
Therefore by symmetry,
\[
  \Pr[\abs{S} \le 1 \mid T, X_T ]
		\ge \Pr[ Y_T \le 0 \mid T, X_T ]
		\ge \frac{1}{2} \, .
\]

\medskip

\textbf{Case 2:}
$T = n - 2$.
In this case,
\[
  \abs{Y_T}
	  \le \abs{v_{n-1}} + \abs{v_n}
		\le 2 - \abs{v_1}
		\le 2 - \abs{v_{n - 1}}.
\]
Recall that $1 - \abs{v_{n-1}} < X_T \le 1$.
Hence if $Y_T \le 0$,
then $\abs{S} = \abs{X_T + Y_T} \le 1$.
Therefore by symmetry,
\[
  \Pr[\abs{S} \le 1 \mid T, X_T ]
		\ge \Pr[ Y_T \le 0 \mid T, X_T ]
		\ge \frac{1}{2} \, .
\]

\medskip

\textbf{Case 3:}
$T \le n - 3$.
In this case, by the stopping rule,
\[
  \sum_{i = T + 1}^n v_i^2
	  \le 1 - \sum_{i = 1}^T v_i^2
		\le 1 - v_1^2 - v_2^2
		\le 1 - 2 v_{T + 1}^2
		< 1 - 2(1 - X_T)^2 .
\]
We can bound the final expression as follows:
\[
  1 - 2(1 - X_T)^2
	  = \frac{2}{7} (1 + X_T)^2 - \frac{1}{7} (4 X_T - 3)^2
		\le \frac{2}{7} (1 + X_T)^2 .
\]
Hence the hypotheses of Lemma~\ref{lem:initial_bound} are satisfied with $x = X_T$ and $Y = Y_T$.
By Lemma~\ref{lem:initial_bound}, we conclude that
\[
  \Pr[\abs{S} \le 1 \mid T, X_T ]
	  = \Pr[ \abs{X_T + Y_T} \le 1 \mid T, X_T ]
		\ge \frac{37}{98} \, .
\]
\end{proof}

\section{Further improvement}
\label{sec:best_bound}

In this section, we will improve the lower bound to~$\frac{13}{32}$, which is $40.625\%$.
At the end, we will sketch how to improve the bound further, to $\frac{13}{32} + 9 \times 10^{-6}$.

Let us examine where the proof of Theorem~\ref{thm:initial_bound} is potentially tight.
Looking at its Case~3, we see that the proof is potentially tight when $T = 2$
and $\abs{v_1} = \abs{v_2} = \abs{v_3} = \frac{1}{4}$.
But that scenario is impossible: if $T = 2$, then by the stopping rule, $\abs{v_1} + \abs{v_2} > 1 - \abs{v_3}$.
This suggests that we can sharpen the bound on $\sum_{i=T+1}^n v_i^2$ in terms of $T$ and $X_T$.

Another idea is that our final bound on $\Pr[\abs{S} \le 1]$, instead of being the worst-case conditional bound, may be taken to be a weighted average of the conditional bounds, with weights corresponding to the distribution of $T$.

First, we state the following generalization of Lemma~\ref{lem:initial_bound}.
Given a number~$c$, define $F(c)$ by
\[
  F(c) = \frac{1}{2} ( 1 - 3 c^2 ) .
\]

\begin{lemma} \label{lem:best_bound}
Let $c$ be a nonnegative number.
Let $x$ be a real number such that $\abs{x} \le 1$.
Let $v_1$, $v_2$, \dots, $v_n$ be real numbers such that
\[
  \sum_{i = 1}^n v_i^2 \le c (1 + \abs{x})^2 .
\]
Let $a_1$, $a_2$, \dots, $a_n$ be independent random signs.
Let $Y$ be $\sum_{i=1}^n a_i v_i$.
Then
\[
  \Pr[ \abs{x + Y} \le 1 ] \ge F(c) .
\]
\end{lemma}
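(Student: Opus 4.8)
The plan is to mirror the proof of Lemma~\ref{lem:initial_bound}, replacing the two hard-coded constants ($\frac{2}{7}$ and $\frac{37}{98}$) by the parameter $c$ and the function $F(c)$, and checking that the same chain of inequalities goes through for every value of $c\ge 0$. As before, by symmetry I would assume $x\ge 0$. The first step is the fourth-moment bound: since $\E(Y^4) = 3(\sum v_i^2)^2 - 2\sum v_i^4 \le 3(\sum v_i^2)^2 \le 3c^2(1+x)^4$, the fourth-moment Chebyshev inequality gives $\Pr[\abs{Y}\ge 1+x] \le \E(Y^4)/(1+x)^4 \le 3c^2$. Taking complements, $\Pr[\abs{Y} < 1+x] \ge 1 - 3c^2$. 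Then, because $Y$ has a symmetric distribution, $\Pr[-1-x < Y \le 0] \ge \tfrac12\Pr[\abs{Y}<1+x] \ge \tfrac12(1-3c^2) = F(c)$. Finally, since $0\le x\le 1$, the event $-1-x < Y \le 0$ forces $-1 \le x - 1 - x < x+Y \le x \le 1$, i.e.\ $\abs{x+Y}\le 1$; hence $\Pr[\abs{x+Y}\le 1] \ge \Pr[-1-x<Y\le 0] \ge F(c)$, which is exactly the claimed bound.

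There is really only one subtlety to flag. When $3c^2 \ge 1$ — that is, when $c \ge 1/\sqrt3$ — the Chebyshev bound $\Pr[\abs{Y}\ge 1+x]\le 3c^2$ is vacuous and $F(c) = \tfrac12(1-3c^2)$ is nonpositive. In that regime the conclusion $\Pr[\abs{x+Y}\le 1]\ge F(c)$ holds trivially, since a probability is always nonnegative. So the only case with any content is $0 \le c < 1/\sqrt3$, and there the displayed computation is valid verbatim. I would note this explicitly (or simply observe that the step "taking complements" and the symmetry step both remain true as written even when the right-hand sides are negative, so no case split is strictly needed). The hypothesis $\sum v_i^2 \le c(1+\abs{x})^2$ is used exactly once, to pass from $\E(Y^4)\le 3(\sum v_i^2)^2$ to $\le 3c^2(1+\abs{x})^2$ — nothing else about $c$ is used, which is why an arbitrary nonnegative $c$ works.

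I do not anticipate a genuine obstacle here; the lemma is a clean parametrization of the earlier argument, and taking $c = \tfrac27$ recovers $F(\tfrac27) = \tfrac12(1 - \tfrac{12}{49}) = \tfrac{37}{98}$, consistent with Lemma~\ref{lem:initial_bound}. The one thing to be careful about in writing it up is not to divide by $(1+x)^4$ without recalling that $1+x \ge 1 > 0$, and to keep the symmetry reduction ($x\ge 0$) at the very top so that the inequality $x - 1 \ge -1$ is available at the end.
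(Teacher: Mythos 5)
Your proposal is correct and follows essentially the same route as the paper: bound $\E(Y^4)\le 3c^2(1+x)^4$, apply the fourth-moment Chebyshev inequality, and use the symmetry of $Y$ exactly as in Lemma~\ref{lem:initial_bound}, with the hypothesis on $\sum v_i^2$ entering only once. Your extra remark that the bound is vacuous (but still valid) when $c\ge 1/\sqrt{3}$ is a harmless refinement the paper leaves implicit.
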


\begin{proof}
By symmetry, we may assume that $x \ge 0$.
As in the proof of Lemma~\ref{lem:initial_bound}, the fourth moment of~$Y$ satisfies
\[
  \E(Y^4)
	  \le 3 \Bigl( \sum_{i=1}^n v_i^2 \Bigr)^{2}
		\le 3 c^{2} (1 + x)^4 .
\]
So, by the fourth moment version of Chebyshev's inequality,
\[
  \Pr[\abs{Y} \ge 1 + x]
	  \le \frac{\E(Y^4)}{(1 + x)^4}
		\le  3 c^{2} .
\]
Following the proof of Lemma~\ref{lem:initial_bound}, by taking the complement and then using the symmetry of~$Y$,
we have
\[
  \Pr[ \abs{x + Y} \le 1 ] \ge \frac{1}{2} ( 1 - 3 c^{2} ) = F(c) .
\]
\end{proof}

Now we will use Lemma~\ref{lem:best_bound} to prove our $\frac{13}{32}$ lower bound.  

\begin{theorem}  \label{thm:best_bound}
Let $v_1$, $v_2$, \dots, $v_n$ be real numbers such that $\sum_{i=1}^n v_i^2$ is at most~$1$.
Let $a_1$, $a_2$, \dots, $a_n$ be independent random signs.
Let $S$ be $\sum_{i = 1}^n a_i v_i$.
Then
\[
  \Pr[ \abs{S} \le 1 ] > \frac{13}{32}.
\]
\end{theorem}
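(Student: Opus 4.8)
The plan is to refine the proof of Theorem~\ref{thm:initial_bound} along the two lines suggested in the text: (i) sharpen the bound on $\sum_{i=T+1}^n v_i^2$ using the information that the stopping rule did not fire before step $T$, and (ii) replace the worst-case conditional bound by a weighted average over the value of $T$. As before I would reduce to $n \ge 4$ with the four largest coordinates ordered so that $|v_n|\ge|v_1|\ge|v_{n-1}|\ge|v_2|$, giving $|v_1|+|v_2|\le 1$, and introduce the same stopping time $T$ (smallest $t$ with $t=n-1$ or $|X_t|>1-|v_{t+1}|$), so that $|X_T|\le 1$ always, and $|X_T|>1-|v_{T+1}|$ when $T<n-1$. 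Cases $T=n-1$ and $T=n-2$ still give $\Pr[|S|\le 1\mid T,X_T]\ge \tfrac12$ verbatim, so the work is all in the case $T\le n-3$.

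For $T\le n-3$, the key new ingredient is a better tail bound on $\sum_{i>T}v_i^2$. The crude bound used before was $\sum_{i>T}v_i^2\le 1 - v_1^2 - v_2^2 \le 1 - 2v_{T+1}^2 < 1 - 2(1-X_T)^2$. Now I would additionally exploit that the stopping rule did not fire at steps $2,3,\dots,T-1$, i.e. $|X_{t}|\le 1-|v_{t+1}|$ for $t<T$, together with the ordering $|v_1|\ge|v_{n-1}|\ge|v_2|$ and the fact that, when $T=2$, the stopping rule at $t=2$ gives $|v_1|+|v_2|>1-|v_3|\ge 1-|v_{T+1}|$, which rules out the tight configuration $|v_1|=|v_2|=|v_3|=\tfrac14$. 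Quantitatively, I expect to prove something of the form $\sum_{i>T}v_i^2 \le c_T(1+X_T)^2$ for a constant $c_T$ that \emph{decreases} with $T$: for $T\ge 3$ one can afford to subtract $v_1^2+v_2^2+v_3^2$ (three squares, each $\ge v_{T+1}^2\ge (1-X_T)^2$) rather than just two, and for $T=2$ one uses the stopping inequality at $t=2$ directly. Feeding each such bound into Lemma~\ref{lem:best_bound} yields $\Pr[|S|\le 1\mid T,X_T]\ge F(c_T)$, with $F(c_T)$ increasing in $T$; in particular $c_T\le \tfrac14$ for all $T\ge 3$ giving a conditional bound of at least $F(\tfrac14)=\tfrac{13}{32}$ there, while the $T=2$ bound will be somewhat worse than $\tfrac{13}{32}$.

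To close the gap at $T=2$ I would use idea (ii): average over $T$. The point is that the event $T=2$ forces $|v_1|+|v_2|$ to be fairly large, and then with probability at least $\tfrac12$ (over the signs $a_1,a_2$) the partial sum $X_2 = a_1v_1+a_2v_2$ has the two signs opposite, making $|X_2|=\big||v_1|-|v_2|\big|$ small; on that sub-event the remaining sum of squares is again controlled and Lemma~\ref{lem:best_bound} applies with a small $c$, boosting the conditional probability back above $\tfrac{13}{32}$. More precisely, I would split the $T=2$ contribution according to whether $a_1v_1$ and $a_2v_2$ have equal or opposite signs, bound each piece via Lemma~\ref{lem:best_bound} with the appropriate value of $x=X_2$, and check that the average of the two pieces is at least $\tfrac{13}{32}$. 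Combining: in every case the conditional probability given $(T,X_T)$ is at least $\tfrac{13}{32}$, and strictness (the ``$>$'') comes from the fact that $\E(Y^4)\le 3(\sum v_i^2)^2 - 2\sum v_i^4$ has the slack term $2\sum v_i^4>0$ whenever any $v_i\neq 0$ (and the degenerate all-zero case gives $\Pr[|S|\le 1]=1$), or equivalently from strict inequality in one of the case analyses. Averaging over $(T,X_T)$ gives $\Pr[|S|\le1]>\tfrac{13}{32}$.

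The main obstacle I anticipate is the bookkeeping in the $T=2$ case: one must simultaneously use the stopping inequality $|v_1|+|v_2|>1-|v_3|$, the ordering of the four largest coordinates, and the conditioning on the signs $a_1,a_2$, and then verify that the resulting weighted average of $F(\cdot)$ values clears $\tfrac{13}{32}$ — this is where the constant $\tfrac{13}{32}$ (rather than something larger) is presumably forced, and getting the inequalities to line up tightly is the delicate part. A secondary technical point is making the ``decreasing $c_T$'' estimate for $T\ge 3$ fully rigorous, i.e. justifying that one really may subtract three coordinate squares each bounded below by $(1-X_T)^2$; this uses the ordering plus the non-firing of the stopping rule at the earlier steps, and should be routine once set up carefully.
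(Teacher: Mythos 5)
Your high-level plan (sharpen the bound on $\sum_{i>T}v_i^2$ using more coordinates, then average over the distribution of $T$) is indeed the strategy the paper follows, but the quantitative core of your sketch fails. The claim that for $T\ge 3$ you can take $c_T\le\frac14$ by subtracting three coordinate squares, each larger than $(1-X_T)^2$, is false. If you subtract $m$ such squares you only get $\sum_{i>T}v_i^2\le 1-m(1-X_T)^2$, and the best constant in $1-m(1-X)^2\le c\,(1+X)^2$ over $0\le X\le 1$ is
\[
  \max_{0\le X\le 1}\frac{1-m(1-X)^2}{(1+X)^2}=\frac{m}{4m-1}>\frac14,
\]
the maximum occurring at $X=\frac{2m-1}{2m}$. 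So for every fixed $T$, no matter how many squares you peel off, this route gives only $F\bigl(\frac{m}{4m-1}\bigr)<F(\frac14)=\frac{13}{32}$: the obstruction is $X_T$ close to $1$, where the stopping inequality $v_{T+1}>1-X_T$ yields almost nothing. The paper gets around this by introducing the deterministic index $K$ (the smallest $t$ with $v_1+\dots+v_t>1-v_{t+1}$, after sorting) and using the quadratic-mean inequality to extract a deduction of constant size, $\sum_{i=1}^{K+1}v_i^2>\frac1{K+1}$ (and $\sum_{i=1}^{T}v_i^2\ge X_T^2/K$ in the case $T=K$), which does not degenerate as $X_T\to 1$; a convex combination with the crude bound $T(1-X_T)^2$ then gives $c=\frac{(K+1)^2-T}{(2K+1)^2}$.

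Even with that fix, the conditional bound at $T=K$ is $F\bigl(\frac{K^2+K+1}{(2K+1)^2}\bigr)$, which is \emph{below} $\frac{13}{32}$ for every $K$ (it tends to $\frac{13}{32}$ only as $K\to\infty$), so the averaging step is indispensable for every value of $K$, not just in your $T=2$ case. Moreover, your proposed split of the ``$T=2$ contribution'' by the signs of $a_1,a_2$ conflates two different conditionings: on the sub-event where $a_1v_1$ and $a_2v_2$ have opposite signs and $\abs{X_2}$ is small, the stopping rule does not fire at $t=2$, so that sub-event is disjoint from $\{T=2\}$ and cannot boost $\Pr[\abs{S}\le 1\mid T=2]$. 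The correct bookkeeping is the paper's: for $K\le n-4$ one has $T=K$ exactly when $a_1=\dots=a_K$, hence $\Pr[T=K]=2^{-(K-1)}$, and otherwise $T\ge K+2$ (the value $K+1$ is skipped); the proof then reduces to checking
\[
  \tfrac{1}{2^{K-1}}F\Bigl(\tfrac{K^2+K+1}{(2K+1)^2}\Bigr)+\Bigl(1-\tfrac{1}{2^{K-1}}\Bigr)F\Bigl(\tfrac{K^2+K-1}{(2K+1)^2}\Bigr)>\tfrac{13}{32},
\]
equivalently $64(K^2+K)<2^{K-1}(40K^2+40K-15)$, for all $K\ge 2$. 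Without the constant-size deduction coming from $K$ and without this exact two-point averaging (with the surplus at $T\ge K+2$ quantified), your argument does not reach $\frac{13}{32}$.
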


\begin{proof}
By inserting $0$'s,
we may assume that $n \ge 4$.
By symmetry, we may assume that each $v_i$ is nonnegative.
By permuting,
we may assume that the $v_i$ are ordered as follows:
\[
  v_n \ge v_1 \ge v_{n-1} \ge v_2 \ge v_3 \ge \dots \ge v_{n-2} .
\]
Except for the oddballs $v_n$ and $v_{n-1}$, the order is decreasing.
As in Theorem~\ref{thm:initial_bound},
we have $v_1 + v_2 + v_{n-1} + v_n \le 2$
and $v_1 + v_2 \le 1$.

Given an integer~$t$ from $0$ to $n$,
let $M_t$ be the sum~$\sum_{i=1}^t v_i$.
Let $K$ be the smallest nonnegative integer~$t$ such that
$t = n - 1$ or $M_t > 1 - v_{t + 1}$.
The parameter~$K$ measures how spread out the $v_i$ are.
Note that $K \ge 2$, since $v_1 + v_2 \le 1$.
By the definition of~$K$, observe that $M_{K - 1} \le 1 - v_K$ and hence $M_K \le 1$.
Also, if $K < n - 1$, then $M_K > 1 - v_{K + 1}$ and hence $M_{K + 1} > 1$.

Given an integer~$t$ from $0$ to $n$,
define the sums $X_t$ and $Y_t$ as in Theorem~\ref{thm:initial_bound}.
Note that $\abs{X_t} \le M_t$.
Following Theorem~\ref{thm:initial_bound}, let $T$ be the smallest nonnegative integer~$t$ such that
$t = n - 1$ or $\abs{X_t} > 1 - v_{t + 1}$.
Note that $T \ge K$.
As before, we have $\abs{X_{T - 1}} \le 1 - v_T$ and $\abs{X_T} \le 1$.
Also, if $T < n - 1$, then $\abs{X_T} > 1 - v_{T + 1}$.

We will bound from below the conditional probability $\Pr[\abs{S} \le 1 \mid T ]$.
Namely, we will prove the two-piece lower bound
\[
  \Pr[\abs{S} \le 1 \mid T ] \ge
		\begin{cases}
		  F\left( \frac{(K + 1)^2 - T}{(2K + 1)^2} \right)  &  \textrm{if } T \le \frac{3K+2}{2} \, ; \\
			F\left( \frac{K}{4K + 2} \right)                  &  \textrm{if } T \ge \frac{3K+2}{2} \, .
		\end{cases}
\]
We will actually prove the same lower bound on the refined conditional probability $\Pr[\abs{S} \le 1 \mid T, X_T ]$.
To prove this claim,
we may assume by symmetry that $X_T \ge 0$.
We will divide the proof of the claim into five cases, depending on~$T$.

\medskip

\textbf{Case 1:} $T = n - 1$.
The proof of this case is the same as Case~1 of Theorem~\ref{thm:initial_bound},
which yields the stronger bound $\Pr[\abs{S} \le 1 \mid T, X_T ] \ge \frac{1}{2}$.

\medskip

\textbf{Case 2:} $T = n - 2$.
The proof of this case is the same as Case~2 of Theorem~\ref{thm:initial_bound},
which yields the stronger bound $\Pr[\abs{S} \le 1 \mid T, X_T ] \ge \frac{1}{2}$.

\medskip

\textbf{Case 3:} $K + 1 \le T \le \frac{3K + 2}{2}$ and $T \le n - 3$.
By the quadratic mean inequality,
\[
  \sum_{i = 1}^{K + 1} v_i^2
	  \ge \frac{1}{K + 1} \Bigl( \sum_{i = 1}^{K + 1} v_i \Bigr )^2
		= \frac{1}{K + 1} M_{K + 1}^2
		> \frac{1}{K + 1} \, .
\]
Hence, by splitting our sum into two parts, we get
\[
  \sum_{i = 1}^T v_i^2
		> \frac{1}{K + 1} + (T - K - 1) v_{T + 1}^2
		\ge \frac{1}{K + 1} + (T - K - 1) (1 - X_T)^2 .
\]
As a simpler bound,
\[
  \sum_{i = 1}^T v_i^2
	  \ge T v_{T + 1}^2
		> T (1 - X_T)^2 .
\]
Multiplying the second-to-last inequality by $\frac{2T - K - 1}{2K + 1}$ and the last inequality by $\frac{3K + 2 - 2T}{2K + 1}$,
both multipliers being nonnegative by the case assumption, we get
\[
  \sum_{i = 1}^T v_i^2
	  \ge \frac{2T - K - 1}{(K + 1)(2K + 1)} + \frac{(K + 1)^2 - T}{2K + 1} (1 - X_T)^2 .
\]
Therefore, looking at the complementary sum, we get
\[
  \sum_{i = T+1}^n v_i^2
	  \le \frac{(K + 1)^2 - T}{(K + 1)(2K + 1)} \left[ 2 - (K + 1)(1 - X_T)^2 \right] .
\]
We can bound the bracketed expression as follows:
\begin{align*}
  2 - (K + 1)(1 - X_T)^2
	  &= \frac{K+ 1}{2K + 1} (1 + X_T)^2 - \frac{2}{2K+ 1} \left[ (K + 1) X_T - K \right]^2 \\
		&\le \frac{K + 1}{2K + 1} (1 + X_T)^2 .
\end{align*}
Plugging this inequality back into the previous one, we get
\[
  \sum_{i = T+1}^n v_i^2
	  \le \frac{(K + 1)^2 - T}{(2K + 1)^2} (1 + X_T)^2 .
\]
Hence the hypotheses of Lemma~\ref{lem:best_bound} are satisfied with $c = \frac{(K + 1)^2 - T}{(2K + 1)^2}$,
$x = X_T$, and $Y = Y_T$.
By Lemma~\ref{lem:best_bound}, we conclude that
\[
  \Pr[\abs{S} \le 1 \mid T, X_T ]
		= \Pr[ \abs{X_T + Y_T} \le 1 \mid T, X_T ]
		\ge F\Bigl( \frac{(K + 1)^2 - T}{(2K + 1)^2} \Bigr) .
\]

\medskip

\textbf{Case 4:} $\frac{3K + 2}{2} \le T \le n - 3$.
As in Case~3,
we can bound $\sum_{i = 1}^T v_i^2$ as follows:
\[
  \sum_{i = 1}^T v_i^2
		> \frac{1}{K + 1} + (T - K - 1) (1 - X_T)^2 .
\]
Because $T \ge \frac{3K + 2}{2}$, this inequality implies
\[
  \sum_{i = 1}^T v_i^2
		\ge \frac{1}{K + 1} + \frac{K}{2} (1 - X_T)^2 .
\]
Compare this bound with the combined bound from Case~3:
\[
  \sum_{i = 1}^T v_i^2
	  \ge \frac{2T - K - 1}{(K + 1)(2K + 1)} + \frac{(K + 1)^2 - T}{2K + 1} (1 - X_T)^2 .
\]
Note that our bound on $\sum_{i=1}^T v_i^2$ is the same as this bound from Case~3 when $T = \frac{3K + 2}{2}$.
So we can repeat the remainder of Case~3 to get the same lower bound on $\Pr[\abs{S} \le 1 \mid T, X_T ]$ when $T = \frac{3K + 2}{2}$.
The bound on $\Pr[\abs{S} \le 1 \mid T, X_T ]$ in Case~3 was
\[
  \Pr[\abs{S} \le 1 \mid T, X_T ]
		\ge F\Bigl( \frac{(K + 1)^2 - T}{(2K + 1)^2} \Bigr).
\]
When $T = \frac{3K + 2}{2}$, this bound becomes
\[
  \Pr[\abs{S} \le 1 \mid T, X_T ]
		\ge F \Bigl( \frac{K}{4K + 2} \Bigr) .
\]		
So we get the same bound in our current case.

\medskip

\textbf{Case 5:} $T = K \le n - 3$.
By the quadratic mean inequality,
\[
  \sum_{i = 1}^{T} v_i^2
	  \ge \frac{1}{T} \Bigl( \sum_{i = 1}^{T} v_i \Bigr )^2
		= \frac{1}{T} M_{T}^2
		\ge \frac{1}{T} X_{T}^2
		= \frac{1}{K} X_T^2 .
\]
We can bound the final expression as follows:
\begin{align*}
  \frac{1}{K} X_T^2
	  &= \frac{1}{K + 1} - (1 - X_T)^2 + \frac{1}{K(K + 1)} \left[ (K + 1) X_T - K \right]^2 \\
		&\ge \frac{1}{K + 1} - (1 - X_T)^2 .
\end{align*}
Plugging this inequality back into the previous one, we get
\[
  \sum_{i = 1}^{T} v_i^2
	  \ge \frac{1}{K + 1} - (1 - X_T)^2
		= \frac{1}{K + 1} + (T - K - 1) (1 - X_T)^2 .
\]
This is the same inequality we derived at the beginning of Case~3.
So we can repeat the remainder of Case~3 to get the same lower bound:
\[
  \Pr[\abs{S} \le 1 \mid T, X_T ]
    \ge  F\Bigl( \frac{(K + 1)^2 - T}{(2K + 1)^2} \Bigr) .
\]

\medskip

In summary, we have proved our claim on conditional probability:
\[
  \Pr[\abs{S} \le 1 \mid T ] \ge
		\begin{cases}
		  F\left( \frac{(K + 1)^2 - T}{(2K + 1)^2} \right)  &  \textrm{if } T \le \frac{3K+2}{2} \, ; \\
			F\left( \frac{K}{4K + 2} \right)                  &  \textrm{if } T \ge \frac{3K+2}{2} \, .
		\end{cases}
\]
Next, we will use this conditional bound to derive a lower bound on the unconditional probability $\Pr[\abs{S} \le 1]$.

As mentioned above, we always have $T \ge K$.
In fact, assuming that $K \le n-4$, we have $T=K$ if the signs $a_1$, \dots, $a_K$ are all equal, and otherwise $T \ge K+2$.
This follows from observing that if $a_1$, \dots, $a_K$ are not all equal,
then $\abs{X_K} \le \sum_{i=1}^{K-1} v_i - v_K \le 1 - v_{K+1}$ and $\abs{X_{K+1}} \le \sum_{i=1}^{K-1} v_i - v_K + v_{K+1} \le 1 - v_{K+2}$, by the definition of $K$ and the ordering of the $v_i$.

This shows that for $K \le n-4$ we have $\Pr[T=K]=\frac{1}{2^{K-1}}$ and $\Pr[T \ge K+2]=1-\frac{1}{2^{K-1}}$. Therefore
\begin{align*}
\Pr[&\abs{S} \le 1] \\
  &= \frac{1}{2^{K-1}} \Pr[\abs{S} \le 1 \mid T=K] + \Bigl( 1 - \frac{1}{2^{K-1}} \Bigr) \Pr[\abs{S} \le 1 \mid T \ge K+2] \\
  &\ge \frac{1}{2^{K-1}} F\left( \frac{(K + 1)^2 - K}{(2K + 1)^2} \right)
	   + \Bigl(1 - \frac{1}{2^{K-1}} \Bigr) F\left( \frac{(K + 1)^2 - (K+2)}{(2K + 1)^2} \right).
\end{align*}
Here we have used our conditional bounds, the fact that they are nondecreasing in $T$, and the inequality $K+2 \le \frac{3K+2}{2}$. Note that this lower bound on $\Pr[\abs{S} \le 1]$ remains valid without assuming that $K \le n-4$. Indeed, if $K=n-3$ it is still true that $\Pr[T=K] = \frac{1}{2^{K-1}}$, and while $T=K+1=n-2$ may occur in this case, it yields a conditional bound of $\frac{1}{2}$ as shown in Case~2 above, which is even better than our stated lower bound. The values $n-2$ and $n-1$ for $K$ are of course covered by the conditional bound of $\frac{1}{2}$ in Cases 1 and 2 above.

Thus, to conclude our proof it suffices to show that
\[
  \frac{1}{2^{K-1}} F\left( \frac{(K + 1)^2 - K}{(2K + 1)^2} \right)
	  + \Bigl(1 - \frac{1}{2^{K-1}} \Bigr) F\left( \frac{(K + 1)^2 - (K+2)}{(2K + 1)^2} \right)
		  > \frac{13}{32}
\]
holds for all $K \ge 2$. Substituting the relevant expressions into the formula for $F$ and performing routine manipulations, the latter is shown to be equivalent to
\[ 64(K^2 + K) < 2^{K-1} (40K^2 + 40K - 15), \]
which indeed holds for $K \ge 2$.
\end{proof}

Can we improve this $\frac{13}{32}$ lower bound?
Yes, a little.  
The idea is to replace the fourth moment with the more flexible $p$th moment,
where $p$ is a parameter to be optimized.
To do so, we will need Khintchine's inequality.
This inequality was first proved by Khintchine~\cite{Khintchine} in a weaker form
and later proved by Haagerup~\cite{Haagerup} with the optimal constants.
Namely, given $p \ge 2$,
let $B_p$ be the constant
\[
  B_p = \frac{2^{p/2} \Gamma(\frac{p + 1}{2})}{\sqrt{\pi}} \, ,
\]
where $\Gamma$ is the gamma function.
For example, $B_2 = 1$, $B_3 = 2 \sqrt{2/\pi}$, and $B_4 = 3$.

\begin{theorem}[Khintchine's inequality] \label{thm:khintchine}
Let $p$ be a real number such that $p \ge 2$.
Let $v_1$, $v_2$, \dots, $v_n$ be real numbers.
Let $a_1$, $a_2$, \dots, $a_n$ be independent random signs.
Let $S$ be $\sum_{i = 1}^n a_i v_i$.
Then
\[
  \E(\abs{S}^p) \le B_p \Bigl( \sum_{i=1}^n v_i^2 \Bigr)^{p/2} .
\]
\end{theorem}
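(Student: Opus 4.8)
Let me think about how to prove Khintchine's inequality with the optimal constant $B_p$. This is Haagerup's theorem, and the key realization is that the optimal constant comes from comparing with a Gaussian: the worst case for the $L^p/L^2$ ratio is achieved in the limit $v_1 = \dots = v_n = 1/\sqrt{n}$ as $n \to \infty$, where the normalized sum converges to a standard Gaussian $g$, and $\E(|g|^p) = B_p$ exactly (this is just the absolute moment of the normal distribution, which gives the stated gamma-function formula). So the target is $\E(|S|^p) \le B_p$ when $\sum v_i^2 = 1$, and the general case follows by homogeneity.

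The plan is to use the approach via the inequality $|t|^p \le$ (a suitable function with known Rademacher expectation). Concretely, I would first reduce to $\sum v_i^2 = 1$ by scaling. Then the central trick is the following pointwise bound: there is a clean way to write $|t|^p$ as an average, but the cleanest route to the sharp constant is the lemma that for $p \ge 2$ and any real $t$,
\[
  |t|^p \le B_p \cdot \E_g\bigl[ \text{something} \bigr],
\]
or, following Haagerup more directly, to prove the two-point inequality: for all real $x$ and all $a \in [-1,1]$,
\[
  \frac{|x+a|^p + |x-a|^p}{2} \le \bigl(x^2 + a^2\bigr)^{p/2} \cdot (\text{correction}),
\]
and then induct on $n$. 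The difficulty is that a naive induction on $n$ with the subadditivity-type step $\E(|X + a_n v_n|^p \mid a_1,\dots,a_{n-1}) \le (\,(\text{stuff})^2 + v_n^2\,)^{p/2}$ does not close cleanly because $(u^2 + v^2)^{p/2}$ is not what conditioning produces — one needs the genuinely sharp pointwise estimate, and establishing that estimate is the technical heart.

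So the key steps, in order: (1) reduce to $\sum v_i^2 = 1$ by homogeneity; (2) establish the sharp pointwise/two-point lemma — for $p\ge 2$, the function $t \mapsto |t|^p$ is dominated (in the relevant averaged sense) by an expression that behaves well under adding an independent sign, with the constant $B_p$ tracked exactly; (3) run the induction on $n$, using at each stage independence of $a_n$ from $a_1,\dots,a_{n-1}$ and the lemma with $x = \sum_{i<n} a_i v_i$ and $a = v_n$; (4) identify the resulting constant as $\E(|g|^p)$ for standard normal $g$ and compute it to be exactly $2^{p/2}\Gamma(\tfrac{p+1}{2})/\sqrt{\pi} = B_p$. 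Alternatively — and this is probably the cleaner exposition — I would cite the result, since it is standard (Haagerup~\cite{Haagerup}), and simply verify the special values $B_2 = 1$, $B_3 = 2\sqrt{2/\pi}$, $B_4 = 3$ from the gamma-function formula as a sanity check, then move on to apply it.

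I expect the main obstacle to be step (2): getting the pointwise comparison inequality with precisely the constant $B_p$ and no slack, since any slack propagates through the induction and loses optimality. This is exactly where Haagerup's original argument is delicate (it involves a careful analysis of a one-variable function and its convexity/monotonicity properties, with separate treatment near $t=0$), and it is the reason the sharp constant was open for decades after Khintchine's original weaker bound. For the purposes of this paper, though, since only $\E(|S|^p) \le B_p (\sum v_i^2)^{p/2}$ is needed and not a new proof of it, I would treat Theorem~\ref{thm:khintchine} as a black box from~\cite{Haagerup} and not reprove it.
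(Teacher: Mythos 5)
Your proposal lands in the same place as the paper: Theorem~\ref{thm:khintchine} is a known result due to Haagerup~\cite{Haagerup} (sharpening Khintchine~\cite{Khintchine}), and the paper simply cites it without proof, exactly as you ultimately recommend treating it as a black box. Your sketch of Haagerup's induction is fine as background but is not needed, and rightly you do not claim to have closed its technical heart.
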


For the improved lower bound, choose with foresight $p = 3.95937$.
In Lemma~\ref{lem:best_bound}, replace the fourth moment with the $p$th moment 
and apply Khintchine's inequality (with $S = Y$),
which allows us to replace the function~$F$ with the function~$G$ defined by 
$G(c) = \frac{1}{2} ( 1 - B_p c^{p/2} )$.
Use this revised lemma in Theorem~\ref{thm:best_bound}.     
The resulting lower bound is~$G(\frac{1}{4})$, which is bigger than $\frac{13}{32} + 9 \times 10^{-6}$.  
We omit the details.

\end{document}